\documentclass[12pt,a4paper]{amsart}
\usepackage{amsmath}
\usepackage{amsthm}
\usepackage{amssymb}
\usepackage{mathrsfs}
\usepackage[english]{babel}
\usepackage{ot1patch}

\newtheorem{thm}{Theorem}[section]

\newtheorem{cor}[thm]{Corollary}
\newtheorem{prop}[thm]{Proposition}

\theoremstyle{remark}
\newtheorem{rmk}[thm]{Remark}
\newtheorem*{rmk*}{Remark}

\theoremstyle{definition}
\newtheorem{dfn}[thm]{Definition}
\newtheorem{ex}[thm]{Example}

\numberwithin{equation}{section}

\newcommand{\ch}{{\mathcal{O}_c}}
\newcommand{\C}{\mathbb{C}}

\title{Algebraic normalisation}

\author{Adam Bia\l o\.zyt}\address{Jagiellonian University, Faculty of Mathematics and Computer Science, Institute of Mathematics, \L ojasiewicza 6, 30-348 Krak\'ow, Poland}\email{adam.bialozyt@doctoral.uj.edu.pl}
\date{November 30th 2023}
\keywords{Complex analytic and algebraic sets, c-holo\-morph\-ic
functions, Normalisation}
\subjclass{32B15, 32A17, 32A22}

\begin{document}
\begin{abstract}
We present a strictly geometric c-algebraic version of the analytic set normalisation. With the introduced tool we prove the Nullstellensatz for c-algebraic functions and study the growth exponent of a c-algebraic function.    
\end{abstract}

\maketitle

\section{Introduction}

One of the most elegant theorems in analytic geometry is the following statement.

\begin{thm}
Every analytic set $A$ admits a normalisation.
\end{thm}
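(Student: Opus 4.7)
Since normalisation is a local question on $A$, I would fix a point $a\in A$ and work in some ambient neighbourhood $\om\subset\C^n$. The plan is to realise the normalisation as the analytic spectrum of the sheaf of \emph{weakly holomorphic functions}, following the classical Grauert--Remmert construction. A preliminary reduction: the germ $(A,a)$ has only finitely many irreducible components, and a normalisation of $A$ is obtained by taking the disjoint union of normalisations of the components; so one may assume $A$ is irreducible at $a$.

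Writing $S=\mathrm{Sing}(A)$ for the singular locus, I would introduce the sheaf $\tilde{\mathcal{O}}_A$ whose sections over an open $U\subset A$ are holomorphic functions on $U\setminus S$ that are locally bounded near $U\cap S$. One has $\mathcal{O}_A\hookrightarrow\tilde{\mathcal{O}}_A$, with equality over $A\setminus S$. The candidate normalisation is then $\tilde{A}=\mathrm{Specan}(\tilde{\mathcal{O}}_A)$ equipped with the canonical projection $\pi\colon\tilde{A}\to A$.

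The heart of the proof, and the step I expect to be the main obstacle, is showing that $\tilde{\mathcal{O}}_A$ is a coherent $\mathcal{O}_A$-module of finite type that is integrally closed in its total ring of fractions. The standard device is a \emph{universal denominator}: a non-zero-divisor $d\in\mathcal{O}_{A,a}$ vanishing on $S$ with $d\cdot\tilde{\mathcal{O}}_{A,a}\subset\mathcal{O}_{A,a}$. Such a $d$ is furnished by the discriminant of a local Noether normalisation $A\to\C^k$: this realises $\mathcal{O}_{A,a}$ as a finite extension of $\mathcal{O}_{\C^k,0}$ and forces any weakly holomorphic germ, expressed through its minimal polynomial over the base, to become holomorphic after multiplication by $d$. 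Consequently $\tilde{\mathcal{O}}_A$ embeds into the coherent $\mathcal{O}_A$-module $d^{-1}\mathcal{O}_A$, and Oka's coherence theorem then identifies it with the integral closure of $\mathcal{O}_A$ inside its sheaf of meromorphic functions.

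With coherence granted, the normalisation axioms follow formally: $\pi$ is finite and surjective because $\tilde{\mathcal{O}}_A$ is a finite $\mathcal{O}_A$-algebra; $\tilde{A}$ is normal because its local rings are integrally closed; and $\pi$ restricts to a biholomorphism over $A\setminus S$ since there the two structure sheaves coincide. The universal property of $\pi$ then follows by lifting any finite, surjective, generically biholomorphic map onto $A$ through the corresponding sections of $\tilde{\mathcal{O}}_A$.
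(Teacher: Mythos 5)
The paper does not in fact prove this theorem --- it is stated as classical background and the proof is delegated to \cite{Loj} --- so your sketch is not competing with an argument in the text. It is, however, a faithful outline of the standard Grauert--Remmert construction, and it turns on exactly the device the paper isolates in its Section 2: a universal denominator $d$ obtained from the discriminant of a local Noether normalisation, which forces $d\cdot\tilde{\mathcal{O}}_{A}\subset\mathcal{O}_{A}$ and hence finiteness of the weakly holomorphic sheaf over $\mathcal{O}_A$. The difference is one of packaging. You pass to $\mathrm{Specan}(\tilde{\mathcal{O}}_A)$ and lean on coherence theory; \L ojasiewicz --- and, in the algebraic setting, the paper's own Theorem 3.1 --- instead chooses finitely many generators $h_1,\dots,h_r$ of the weakly holomorphic module and realises the normalisation concretely as the closure of the graph of $(h_1,\dots,h_r)$ over $\mathrm{Reg}\,A$, which makes normality and the biholomorphism over the regular part visible by hand, at the cost of gluing local constructions; your route buys the universal property and functoriality for free but hides the geometry. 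Two steps in your sketch deserve more than a gesture: the identification of $\tilde{\mathcal{O}}_A$ with the integral closure of $\mathcal{O}_A$ in its total ring of fractions is a theorem in its own right (the Oka normality criterion), not a formal consequence of coherence; and the normality of $\tilde{A}$ is not automatic from ``local rings are integrally closed'' --- one must check that weakly holomorphic functions on $\tilde{A}$ correspond to weakly holomorphic functions on $A$, which is precisely where the generic biholomorphism and the density of $\pi^{-1}(\mathrm{Reg}\,A)$ are used. With those two points filled in from the classical literature, the argument is sound.
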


The theorem is brief and concise, yet one must unpack a page-long list of definitions to understand its meaning fully.

Firstly, just for the sake of completeness, let us recall that we consider $A\subset \mathbb{C}^m$ to be analytic if it is given locally as an intersection of zeros of holomorphic functions. 

Secondly, we call an analytic set $Y\subset\mathbb{C}^m$ normal if every holomorphic function defined in its regular part $f: Reg\; Y\to \mathbb{C}$ which is locally bounded near singular points of $Y$ (we call such functions weakly holomorphic), extends holomorphically to a function $F$ defined on the whole $\mathbb{C}^m$. 

Finally a mapping $\pi: Y\to A$ is called a normalisation of an analytic set $A$ if
\begin{enumerate}
    \item The set $Y$ is a normal analytic set.
    \item The mapping $\pi$ is a finite holomorphism.
    \item The inverse image $\pi^{-1}(Reg\; A)$ is dense in $Y$.
    \item The restriction $\pi\mid \pi^{-1}(Reg\; A)$ is a biholomorphism.
\end{enumerate}

A proof of Theorem 1.1 can be found for example in Łojasiewicz's book \cite{Loj}. Normal sets are exactly the ones where the ring of weakly holomorphic functions is the same as the ring of holomorphic ones. In general, however, the holomorphic ring is strictly included in the weak holomorphic one and there is even one more well-renowned class placed between them. The intermediate step between holomorphic and weakly holomorphic functions 
comes from R. Remmert and consists of functions holomorphic on the regular part of $A$ and continuous on $A$.

\begin{dfn} (cf. \cite{R}) A mapping $f\colon
A\to{\C}^n$ is called {\it c-ho\-lo\-morph\-ic} if it is
continuous and the restriction of $f$ to the subset of regular
points $Reg\; A$ is holomorphic. We denote by $\ch (A)$ the ring of
c-holomorphic functions. 
\end{dfn}

Judging from the geometric perspective, c-holomorphic functions form a convenient generalisation of holomorphic mappings. For one, their graphs are always analytic sets. A natural subclass of c-holomorphic functions that we can expect to behave like polynomials or regular functions is formed by those c-holomorphic functions whose graphs are algebraic. 

\begin{dfn}
We will call {\it c-algebraic} any c-holomorphic function $f\colon A\to{\C}$ having an algebraic graph, where $A\subset{\C}^m$ is a fixed analytic set. 
We will denote by $$\mathcal{O}_c^\mathrm{a}(A)=\{f\in\mathcal{O}_c(A)\mid \Gamma_f\> \textrm{is algebraic}\}$$ the ring of such functions. We will also call $f=(f_1,\dots, f_n)$ c-algebraic if all functions $f_i$ are c-algebraic.

\end{dfn}
\begin{rmk} Of course, by the Chevalley-Remmert Theorem 
$$
{\mathcal{O}_c^a}(A)\neq\varnothing\>\Rightarrow\> A\>\textrm{is algebraic}.
$$
\end{rmk}

Readers interested in real algebraic geometry can see c-algebraic functions as a kind of complex counterpart of the recently introduced {\it regulous} functions \cite{FHMM}, \cite{Kol} in connection with \cite{Krz}.

Let us proceed with refinements of the classical normalisation theorem particularly interesting for us when working with c-algebraic functions.

Firstly, to adhere to the algebraic spirit of c-algebraic functions we will demand algebraic normal sets to constitute the following property.

\begin{dfn}
    We call an algebraic set $Y\subset \mathbb{C}^m$ a-normal if every c-algebraic function $f$ defined on $Y$ extends to a polynomial defined on $\mathbb{C}^m$.
\end{dfn}

\begin{ex}
    Let us remark swiftly that not every algebraic set is a-normal. Indeed, the primordial example of a set $A=\{y^2=x^3\}$ together with a function $f(x,y)=y/x, x\neq 0, f(0,0) = 0$ proves the point. The function $f$ is c-algebraic yet its derivative is unbounded near zero which cannot happen for the restrictions of polynomials.  
\end{ex}

The algebraisied definition of a normalisation should take the form of

\begin{dfn}\label{aNormDef}
We call $\pi: Y\to A$ an a-normalisation of an algebraic set $A$ if it satisfies the following conditions
\begin{enumerate}
    \item The set $Y$ is an a-normal algebraic set.
    \item The function $\pi$ is a bijective c-algebraic function. 
    \item The preimage $\pi^{-1}(Reg A)$ is dense in $Y$. 
    \item The restriction $\pi\mid\pi^{-1}(Reg A)$ is a biholomorphism. 
\end{enumerate}
\end{dfn}

With these definitions, we would like to recreate the Theorem 1.1:

\begin{thm}\label{a-normalisation}
    Every algebraic set $A\subset \mathbb{C}^m$ admits an a-normalisation.
\end{thm}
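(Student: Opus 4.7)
The plan is to obtain the a-normalisation by modifying the classical normalisation $\sigma\colon\widetilde A\to A$ from Theorem~1.1 so as to force bijectivity. First I would exploit the algebraicity of $A$ to realise $\widetilde A$ itself as an algebraic variety, for instance by letting $\C[\widetilde A]$ be the integral closure of $\C[A]$ in its total ring of fractions; by uniqueness of normalisation this coincides with the analytic construction, and $\sigma$ becomes a finite regular morphism. The map $\sigma$ is already biholomorphic above $\mathrm{Reg}\,A$, but typically has several preimages above points where $A$ has more than one analytic branch, so bijectivity must be enforced by collapsing each $\sigma$-fibre to a single point.

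To do so algebraically, consider
\[
R \;=\; \{g\in\C[\widetilde A] : g \text{ is constant on each fibre of }\sigma\}.
\]
Finiteness of $\sigma$ makes $\C[\widetilde A]$ a finite $\sigma^{\ast}\C[A]$-module; Noetherianity of $\sigma^{\ast}\C[A]$ then forces the intermediate submodule $R$ to be a finitely generated $\C$-algebra. Pick generators $h_{1},\ldots,h_{M}$ of $R$ and set
\[
Y \;:=\; \bigl\{\,(h_{1}(q),\ldots,h_{M}(q)) : q\in\widetilde A\,\bigr\} \;\subseteq\; \C^{M};
\]
this is an algebraic set with coordinate ring $R$, and by construction the regular map $h=(h_1,\dots,h_M)\colon\widetilde A\to Y$ identifies exactly the fibres of $\sigma$ (since $\sigma^{\ast}\C[A]\subseteq R$, two points of $\widetilde A$ are identified by $h$ if and only if $\sigma$ maps them to the same point). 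The induced $\pi\colon Y\to A$ is therefore bijective. Conditions~(3) and~(4) of Definition~\ref{aNormDef} are inherited from $\sigma$: above $\mathrm{Reg}\,A$ the $\sigma$-fibres are singletons, so $\widetilde A$ and $Y$ are identified there and $\pi$ coincides with the biholomorphism $\sigma$.

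The main content is a-normality of $Y$. Given a c-algebraic $f\colon Y\to\C$, the pullback $\widetilde f := f\circ h$ is continuous and has algebraic graph (the preimage of $\Gamma_{f}$); Riemann's removable singularity theorem applied across $h^{-1}(\mathrm{Sing}\,Y)\cap\mathrm{Reg}\,\widetilde A$ shows that $\widetilde f$ is holomorphic on $\mathrm{Reg}\,\widetilde A$, hence c-algebraic on $\widetilde A$. Continuity then makes the projection $\Gamma_{\widetilde f}\to\widetilde A$ a finite morphism of algebraic varieties, so $\widetilde f$ is integral over $\C[\widetilde A]$; integral closedness of $\C[\widetilde A]$ in its total ring of fractions yields $\widetilde f\in\C[\widetilde A]$, and fibre-constancy places it in $R$. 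Hence $f$ extends to a polynomial on $\C^{M}$. The principal obstacle is this passage from ``algebraic graph plus continuity'' to finiteness of the graph projection: one must convert the topological properness of $\Gamma_{\widetilde f}\to\widetilde A$, granted by continuity of $\widetilde f$, into its algebraic analogue, so as to obtain a genuine monic integral relation and feed the integral-closedness machinery.
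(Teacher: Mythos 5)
Your construction is sound in outline but takes a genuinely different route from the paper. You pass through the classical normalisation $\sigma\colon\widetilde A\to A$ and then collapse its fibres via the subring $R\subset\C[\widetilde A]$ of fibre-constant regular functions; this is essentially the seminormalisation of Bernard, which the paper mentions only in its final section as a parallel, purely algebraic approach still to be compared with its own. The paper never leaves $A$: the universal denominator $Q$ of Section~2 gives a ring monomorphism $f\mapsto Qf$ of $\mathcal{O}_c^a(A)$ into $\C[z]$, Noetherianity then yields finitely many generators $h_1,\dots,h_r$ of $\mathcal{O}_c^a(A)$ as a $\C[z]$-module, and the a-normalisation is the graph $H$ of $(h_1,\dots,h_r)$ over $A$; a-normality of $H$ is a two-line computation, since $f(z,h(z))=\sum p_i(z)h_i(z)$ extends tautologically to the polynomial $\sum p_i(z)w_i$. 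The two objects agree ($R$ is identified with $\mathcal{O}_c^a(A)$ by pushing forward along $\sigma$, which is bijective over $\mathrm{Reg}\,A$ and proper), so your $Y$ and the paper's $H$ coincide up to a polynomial change of coordinates. What the paper's route buys is the complete avoidance of the normalisation theorem and of the questions of algebraicity of $\widetilde A$ and closedness of the image $h(\widetilde A)$; what yours buys is the conceptual link to seminormalisation and freedom from the pure-dimensionality hypothesis that the paper's universal-denominator machinery requires.

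There is one genuine gap, which you flag yourself: the passage from ``$\widetilde f$ is c-algebraic on the normal variety $\widetilde A$'' to ``$\widetilde f\in\C[\widetilde A]$''. Your proposed mechanism --- topological properness of $\Gamma_{\widetilde f}\to\widetilde A$ implies finiteness of the morphism, hence a monic integral equation for $\widetilde f$ over $\C[\widetilde A]$ --- rests on a true but nontrivial comparison statement (classically proper $+$ finite fibres $\Rightarrow$ finite morphism) that you neither prove nor cite, and without it the integral-closedness argument has nothing to act on. The paper's Theorem~\ref{rational repres} is exactly the tool that closes this hole: every c-algebraic function on a purely dimensional algebraic set is $R/Q$ with $R,Q$ polynomials, so $\widetilde f$ is a rational function continuous on the normal set $\widetilde A$, hence regular, hence a restriction of a polynomial. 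Substituting that argument (or an explicit reference for the properness-to-finiteness conversion) completes your proof. A smaller point to make explicit: take the $h_1,\dots,h_M$ to include the coordinate functions of $A$, so that they generate $R$ as a $\C$-algebra rather than merely as a $\sigma^{*}\C[A]$-module, and so that $\pi$ is literally a coordinate projection.
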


\begin{rmk}
    The condition $(3)$ from Definition~\ref{aNormDef} is superfluous. Since $\pi$ is postulated to be c-algebraic, the preimage of $RegA$ is always dense in $Y$. Indeed, the continuity of $\pi$ asserts that for every neighbourhood $U$ of a point $\pi(y)\in A$ we can find a neighbourhood $V\ni y$ such that $\pi(V)\subset U$. Now, since $RegA$ is dense in $A$, we can find a point $a\in RegA\cap \pi(V)$ thus $\pi^{-1}(a)$ lies in $V$. We still include the condition $(3)$ in the Definition~\ref{aNormDef} to maintain a bijection between conditions of normalisation and a-normalisation.
\end{rmk}

Now, are a-normal sets normal? Not necessarily: Observe that an algebraic set $A=\{xy=0\}$ is a-normal. Indeed, if $f\in\mathcal{O}_c^a(A)$ then, using Serre's algebraic graph theorem, we can easily find polynomials $P(x)$ and $Q(y)$ such that $f(x,y) = xP(x) + yQ(y) + f(0)$. However, $A$ cannot be normal as it is reducible.

Are on the other hand normal algebraic sets a-normal? They are! For, any c-algebraic function is a restriction of a rational function (see Theorem~\ref{rational repres}) and any rational function on a normal algebraic set is a restriction of a polynomial.

\section{Universal denominator}
The main tool used in the construction of normalisations is the so-called universal denominator. 

\begin{dfn}
    We call a function $Q$ a universal denominator for $A$ iff $Q$ does not vanish on any component of $A$ and $Qf$ is holomorphic for every $f\in\mathcal{O}_w(A)$
\end{dfn}

We can construct the universal denominator locally for any pure dimensional analytic set. It can be retrieved from a special description of the set.

\begin{prop}\label{universal den anal}
Let $A\subset U\times\mathbb{C}_t\times\mathbb{C}^{m-k}_y$ be a purely $k-$dimensional analytic set, where $U\subset \mathbb{C}^k_x$ is open and connected. Assume the natural projection $\pi:U\times\mathbb{C}\times\mathbb{C}^{m-k}\ni (x,t,y)\to x\in U$ to be proper on $A$ with a covering number $d$. Then, possibly after a rotation in $(t,y)$ coordinates, there exists a unitary polynomial $P\in\mathcal{O}(U)[t]$ of degree $d$ such that $Q(x,t,y)=\frac{\partial P}{\partial t}(x,t)$ is a universal denominator for $A$.
\end{prop}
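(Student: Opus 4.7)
The plan is to realise $P$ as the monic polynomial whose roots are the $t$-coordinates of the sheets of $\pi$ and to exhibit $Q=\partial P/\partial t$ as a universal denominator via a Lagrange-interpolation identity. To set this up I would first perform a Zariski-generic linear change of coordinates in the $(t,y)$-fibre so that the $t$-coordinate separates the $d$ sheets of $\pi$ over a dense open subset of every irreducible component of $A$; this is the classical primitive-element argument applied componentwise, and only countably many directions need to be avoided.

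Let $\Sigma\subset U$ denote the branch locus of $\pi$, and over $U\setminus\Sigma$ label the preimages as $\bigl(x,t_i(x),y_i(x)\bigr)$, $i=1,\dots,d$. Set
\[
P(x,T)=\prod_{i=1}^{d}\bigl(T-t_i(x)\bigr).
\]
Its coefficients are elementary symmetric functions in the $t_i(x)$: single-valued, holomorphic on $U\setminus\Sigma$, and locally bounded because $\pi$ is proper, hence extending by Riemann's second removable-singularity theorem to elements of $\mathcal{O}(U)$. The derivative $Q=\partial P/\partial t$ is then a polynomial in $t$ with holomorphic coefficients on $U$, and at any point $(x,t_k(x),y_k(x))$ in the \'etale locus one has $Q(x,t_k(x))=\prod_{j\neq k}(t_k-t_j)\neq 0$ by the separation property, so $Q$ does not vanish on any component of $A$.

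For a weakly holomorphic function $f\in\mathcal{O}_w(A)$, introduce the Lagrange-type expression
\[
G(x,T)=\sum_{i=1}^{d} f\bigl(x,t_i(x),y_i(x)\bigr)\prod_{j\neq i}\bigl(T-t_j(x)\bigr).
\]
The $T$-coefficients of $G$ are symmetric in the sheet index, hence single-valued on $U\setminus\Sigma$, and locally bounded (since $f$ is locally bounded near the singular locus of $A$); Riemann's extension theorem again yields $G\in\mathcal{O}(U)[T]$. A direct calculation gives $G\bigl(x,t_k(x)\bigr)=f\bigl(x,t_k(x),y_k(x)\bigr)\,Q\bigl(x,t_k(x)\bigr)$, so the identity $G(x,t)=Q(x,t)\,f(x,t,y)$ holds on the dense \'etale locus of $A$ and, by continuity of $f$, on all of $A$. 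Hence $Qf$ is the restriction of the holomorphic function $G$, which is the required universal denominator property.

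The main obstacle lies in the very first step: producing one linear form in $(t,y)$ that simultaneously separates sheets over every irreducible component of $A$ and keeps $Q$ non-vanishing on each of them. This reduces to observing that, for each of the finitely many components, almost every direction in the fibre is primitive for the associated covering, so a common admissible rotation exists; the remaining steps are then essentially symmetric-function bookkeeping together with two applications of Riemann's theorem on removable singularities.
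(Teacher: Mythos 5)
Your proposal is correct and follows essentially the same route as the paper: after a generic rotation making $t$ separate the sheets, you take $P(x,t)=\prod_i(t-t_i(x))$, set $Q=\partial P/\partial t$, and show $Qf$ is holomorphic via the Lagrange-interpolation sum $\sum_i f(x,t_i(x),y_i(x))\prod_{j\neq i}(t-t_j(x))$ together with Riemann's extension theorem. Your write-up is in fact somewhat more explicit than the paper's about why the symmetric coefficients extend across the branch locus and why $Q$ is nonvanishing on each component; the only cosmetic slip is invoking ``continuity of $f$'' for a weakly holomorphic $f$, which is unnecessary since agreement on the dense regular locus already suffices.
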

\begin{proof}
    
Let $\rho(x,t,y)=(x,t)$ and $\xi(x,t)=x$. Surely, both $\rho$ and $\xi$ must be proper on $A$, and $\rho(A)$ function $\pi$ being such. The projection $\rho(A)$ is therefore an analytic set of codimension 1. Moreover, for a generic $x\in U$ we have $\#\pi^{-1}(x)=d$, thus picking $x\in U$ we can rotate the coordinates $(t,y)$ in such a way that \[\pi^{-1}(x)=\{(x,t_1(x),y_1(x)),\ldots,(x,t_d(x),y_d(x))\}\] with $t_i(x)\neq t_j(x),i\neq j$  and the multiplicity of $\xi$ on $\rho(A)$ is equal $d$. Then $(\rho(A),\xi)$ becomes an analytic cover of $\mathbb{C}^k$ of multiplicity $d$, so we can describe $\rho(A)$ by a unitary polynomial $P(x,t)=\prod_{i=1}^d(t-t_i(x)) \in \mathcal{O}(U)[t]$.

We will prove now that the polynomial $P$ satisfies the theorem assertion.
For any point $x$ in a simply connected set $V$ not intersecting the critical set of $P$ we have  $(t_1(x), y_1(x)), \ldots , (t_d(x), y_d(x))$, exactly $d$
distinct points over $x$ in $A$. Given any $f\in\mathcal{O}_w(A)$ set 
\[h(x,t,y)=\sum_{i=1}^df(x,t_i(x),y_i(x))\prod_{i\neq j}(t-t_j(x)).\]
Clearly $h(x,t_i(x),y_i(x))=f(x,t_i(x),y_i(x))Q(x,t_i(x))$ on $Reg A$, where $Q(x,t,y)=\frac{\partial P}{\partial t}(x,t)$. Now, $h$ is locally holomorphic apart from the critical set of $f$, and locally bounded near the critical points of $P$ and $f$. Therefore we can use the Riemann theorem to extend $h=Qf$ to a holomorphic function defined on $U\times \mathbb{C}\times\mathbb{C}^{m-k}$.
\end{proof}

Since any algebraic set can be rotated in a way the natural projection becomes proper on it, the universal denominator from Proposition~\ref{universal den anal} is a global one. In case we are dealing with c-algebraic function we can obtain an even stronger result.

\begin{thm}\label{rational repres}
    Let $A\subset \mathbb{C}^m$ be a purely dimensional algebraic set. Then there exists a polynomial $Q\in\mathbb{C}[x_1,\ldots,x_m]$ such that any c-algebraic function $f\in\mathcal{O}_c^a(A)$ can be represented as $f=\frac{R}{Q}$ where $R\in\mathbb{C}[x_1,\ldots,x_m]$.
\end{thm}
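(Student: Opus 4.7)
The plan is to upgrade Proposition~\ref{universal den anal} from the analytic to the algebraic setting by exploiting algebraicity at every step. After a suitable linear change of coordinates in $\mathbb{C}^m$ we may assume that the projection $\pi\colon A\to\mathbb{C}^k$, $(x,t,y)\mapsto x$, is proper with generic fibre cardinality $d$, and that the rotation in the $(t,y)$ coordinates demanded by Proposition~\ref{universal den anal} has already been carried out globally. The first observation is that the unitary polynomial $P(x,t)=\prod_i(t-t_i(x))$ produced there is in fact an element of $\mathbb{C}[x_1,\dots,x_k,t]$: its zero-set $\rho(A)$ is an algebraic hypersurface in $\mathbb{C}^{k+1}$, so the coefficients of $P$ (the elementary symmetric functions in the $t_i(x)$) are polynomials in $x$. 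Consequently the universal denominator $Q=\partial P/\partial t$ is itself an honest polynomial on $\mathbb{C}^m$, and moreover does not vanish identically on any component of $A$.

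Next, given an arbitrary $f\in\mathcal{O}_c^a(A)$, I revisit the formula
\[
h(x,t)=\sum_{i=1}^{d}f\bigl(x,t_i(x),y_i(x)\bigr)\prod_{j\neq i}\bigl(t-t_j(x)\bigr)
\]
from the proof of Proposition~\ref{universal den anal}. Crucially, $h$ depends only on $(x,t)$, and the Riemann extension argument of that proof carries over verbatim to produce a holomorphic function on all of $\mathbb{C}^{k+1}$ (now that $U=\mathbb{C}^k$), with $h|_A=Qf$. Writing $h(x,t)=\sum_{\ell=0}^{d-1}c_\ell(x)\,t^\ell$, the coefficients $c_\ell$ are permutation-invariant algebraic expressions in the branches $(t_i(x),y_i(x))$ weighted by the c-algebraic values $f(x,t_i,y_i)$, so each $c_\ell$ is single-valued, entire on $\mathbb{C}^k$, and algebraic over $\mathbb{C}[x]$.

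The only real obstacle is the final step: concluding that each $c_\ell$ is in fact a polynomial. This rests on the standard fact that an entire function on $\mathbb{C}^k$ which is algebraic over $\mathbb{C}[x]$ is automatically polynomial: satisfying a non-trivial equation $a_N(x)g^N+\dots+a_0(x)=0$ forces polynomial growth of $g$ at infinity (away from the zero set of $a_N$, $|g|$ is bounded by a power of $|x|$), and an entire function of polynomial growth on $\mathbb{C}^k$ is a polynomial by a Liouville-type argument. Applied to each $c_\ell$, this shows $h\in\mathbb{C}[x_1,\dots,x_k,t]$; taking $R$ to be $h$ regarded as a polynomial on $\mathbb{C}^m$ (independent of the $y$-variables) gives $Qf=R$ on $A$, and since $Q$ does not vanish on any component of $A$ we recover the desired representation $f=R/Q$ on a dense subset, hence on all of $A$ by continuity.
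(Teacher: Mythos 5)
Your argument is correct and shares the paper's skeleton: both proofs take the universal denominator $Q=\partial P/\partial t$ of Proposition~\ref{universal den anal} and observe that, $\rho(A)$ being an algebraic hypersurface (the paper cites Chevalley, you read the same fact off the coefficients of $P$ as elementary symmetric functions of the branches), $P$ and hence $Q$ are genuine polynomials. Where you diverge is in showing the numerator is a polynomial. The paper disposes of this in one line by applying Serre's algebraic graph theorem to $fQ$, which is c-algebraic on $A$ and extends holomorphically to $\mathbb{C}^m$; you instead expand $h(x,t)=\sum_{\ell}c_\ell(x)t^\ell$ and argue that each coefficient $c_\ell$ is entire and algebraic over $\mathbb{C}[x]$, hence a polynomial. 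This is essentially a hands-on proof of the special case of Serre's theorem that is actually needed, and it is arguably more careful than the paper's rather terse invocation (the paper does not explain why the graph of the global extension $h$, as opposed to its restriction to $A$, is algebraic). One small repair to your last step: the root bound for $c_\ell$ obtained from $a_Nc_\ell^N+\dots+a_0=0$ degenerates on an unbounded neighbourhood of $\{a_N=0\}$, not merely on the thin set $\{a_N=0\}$ itself, so continuity of $c_\ell$ alone does not restore the polynomial estimate there; the standard fix is to pass to $a_Nc_\ell$, which is integral over $\mathbb{C}[x]$, hence of polynomial growth everywhere and therefore a polynomial, and then to note that the entire rational function $c_\ell=(a_Nc_\ell)/a_N$ must itself be a polynomial. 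Finally, the paper treats $\dim A=m$ separately (there $A=\mathbb{C}^m$ and $\mathcal{O}_c^a(A)=\mathbb{C}[x]$ by Serre and Liouville), whereas your projection setup tacitly assumes $k<m$; this case should be mentioned, though it is trivial.
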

\begin{proof}
    If $\dim A = m$ then both Serre algebraic graph theorem and Liouville theorem give us $\mathcal{O}_c^a(A)=\mathbb{C}[x]$ so there is nothing to prove.

    Therefore, let us assume $\dim A = k < m$, $\pi(x,y)\to x\in \mathbb{C}^k$ be proper on $A$ and take $\rho, P, Q$, as in the proof of the previous proposition. Due to Chevalley theorem, we can observe that $\rho(A)$ is algebraic and $1-$codimensional, so the unitary polynomial $P$ is a pure-bred polynomial. Clearly, $Q$ is now a polynomial as well. Following Proposition~\ref{universal den anal} there exists a holomorphic function $h\in\mathcal{O}(\mathbb{C}^m)$ such that \[h=fQ \text{ on }Reg A.\] Since $f=h/Q$ is continuous, we can see that the quotient $h/Q$ is continuous on $A$. 
    
    Now we need to show, that we can take $h$ to be a polynomial. Indeed, $fQ$ is a holomorphic function in $\mathbb{C}^m$ with an algebraic graph over the algebraic set $A$. Thus it extends polynomially to the whole space $\mathbb{C}^m$ due to Serre’s algebraic graph theorem (see \cite{Loj}).
\end{proof}

The main consequence of the universal denominator existence in our case is that the morphism $\Phi: \mathcal{O}_c^a(A)\ni f\to Qf\in \mathbb{C}[z]$ is a monomorphism of rings. Taking into account that the ring of complex polynomials is Noetherian, we get that the ring of c-algebraic functions is finitely generated as a module over $\mathbb{C}[z]$. 

\section{A-normalisation}

We can finally prove Theorem~\ref{a-normalisation}. We will adapt the strategy of Łojasiewicz \cite{Loj} and construct the desired a-normalisation explicitly. 

\begin{thm}
    Let $A\subset \mathbb{C}^m$ be a purely dimensional algebraic set. Take $h_1,\ldots,h_r\in\mathcal{O}_c^a(A)$ the generators of $\mathcal{O}_c^a(A)$ as a module over $\mathbb{C}[z]$ and set \[H:=\{(z,w)\in\mathbb{C}^m\times\mathbb{C}^r\mid z\in A, \, w_i=h_i(z)\}.\]
    Then the natural projection $\pi: H\ni (z,w)\to z\in A$ is an a-normalisation for $A$.
\end{thm}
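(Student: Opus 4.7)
The plan is to verify the four defining conditions of Definition~\ref{aNormDef} in turn. Several of them will follow rather directly from the fact that $H$ is, by construction, the graph of the c-algebraic mapping $h=(h_1,\ldots,h_r)\colon A\to\mathbb{C}^r$; the real content lies in establishing the a-normality of $H$, which will rest on the generating property of the $h_i$.

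First I would check that $H$ is algebraic: since each $h_i$ has algebraic graph $\Gamma_{h_i}\subset\mathbb{C}^m\times\mathbb{C}$, pulling $\Gamma_{h_i}$ back through the coordinate projection $\mathbb{C}^m\times\mathbb{C}^r\to\mathbb{C}^m\times\mathbb{C}_{w_i}$ and intersecting over $i$ presents $H$ as an intersection of algebraic sets. The map $\pi$ is a continuous bijection whose set-theoretic inverse $z\mapsto(z,h(z))$ is holomorphic on $Reg\;A$, so $\pi^{-1}(Reg\;A)$ is biholomorphic to $Reg\;A$ via $\pi$; in particular it is a complex submanifold of $\mathbb{C}^{m+r}$ of dimension $\dim A$. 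Because it is open in $H$ and, by the remark after Definition~\ref{aNormDef}, dense, this forces $\dim H=\dim A$ and $\pi^{-1}(Reg\;A)\subset Reg\;H$; combined with the fact that $\pi$ is the restriction to $H$ of a polynomial projection, this supplies conditions (2)--(4).

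The decisive step is a-normality of $H$. Given $g\in\mathcal{O}_c^a(H)$ I would set $f:=g\circ\pi^{-1}\colon A\to\mathbb{C}$. Continuity is automatic, and holomorphy on $Reg\;A$ follows from the inclusion $\pi^{-1}(Reg\;A)\subset Reg\;H$ established above. Algebraicity of $\Gamma_f$ requires a separate observation: $\Gamma_f$ is the image of the algebraic set $\Gamma_g$ under the polynomial map $((z,w),t)\mapsto(z,t)$, hence constructible by the Chevalley theorem, and since $f$ is continuous, $\Gamma_f$ is closed and therefore algebraic. Thus $f\in\mathcal{O}_c^a(A)$, and the generating hypothesis yields $f=\sum_i P_ih_i$ for polynomials $P_i\in\mathbb{C}[z]$. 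Evaluating on $H$, where $w_i=h_i(z)$, we obtain $g(z,w)=\sum_i P_i(z)w_i$, exhibiting $g$ as the restriction of a polynomial on $\mathbb{C}^m\times\mathbb{C}^r$.

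The most delicate point is the interplay between analytic regularity and algebraic structure in the a-normality step: one must simultaneously know that $g\circ\pi^{-1}$ is c-holomorphic (for which $\pi^{-1}(Reg\;A)\subset Reg\;H$, and hence the dimension bookkeeping guaranteed by $A$ being purely dimensional, is essential) and that its graph is algebraic (for which Chevalley's theorem plus the closedness of a continuous graph is needed). Once those two facts are in place, the module-generation of $\mathcal{O}_c^a(A)$ by the $h_i$ mechanically produces the polynomial extension.
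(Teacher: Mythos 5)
Your proposal is correct and follows essentially the same route as the paper: verify conditions (2)--(4) from the graph structure of $H$, then reduce a-normality to the generating property by passing from $g\in\mathcal{O}_c^a(H)$ to $g\circ\pi^{-1}\in\mathcal{O}_c^a(A)$ and lifting the resulting representation $\sum p_i(z)h_i(z)$ to the polynomial $\sum p_i(z)w_i$. Your extra care in checking that $\pi^{-1}(Reg\,A)\subset Reg\,H$ and that the graph of $g\circ\pi^{-1}$ is algebraic (via Chevalley plus closedness) usefully fills in what the paper compresses into the assertion that a composition of c-algebraic maps is c-algebraic.
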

\begin{proof}
    Surely $\pi$ is a bijective c-algebraic function as it is a projection from the graph of the map $(h_1,\ldots,h_r)$. The preimage $\pi^{-1}(Reg A)$ is dense in $H$ because all $h_i$ are continuous and $RegA$ is dense in $A$. Finally, the restriction $\pi\mid\pi^{-1}(Reg A)$ is a biholomorphism because all $h_i$ restricted to $Reg A$ are holomorphic. We have to check that $H$ is a-normal.

    The algebraicity of $H$ follows again from the fact that H is a graph of a c-algebraic mapping $(h_1,\ldots,h_r)$. Let us take now $f\in\mathcal{O}_c^a(H)$; we need to check if it is a restriction of a polynomial defined in $\mathbb{C}^m\times\mathbb{C}^r$.

    To that end, observe that $A \ni z\to f(z,h(z))\in \mathbb{C}$ is a composition of two c-algebraic functions $f$ and $(id_x,h)$ and as such it is c-algebraic as well. Being so we can find polynomials $p_i\in \mathbb{C}[z]$ such that 
    \[f(z,h(z))=\sum_{i=1}^r p_i(z)h_i(z).\]
    Define now a polynomial $F(z,w):=\sum p_i(z)w_i$. We can see that for $(z,w)\in H$ we have:
    \[F(z,w)=F(z,h(z))=\sum_{i=1}^r p_i(z)h_i(z)=f(z,h(z))=f(z,w).\]
    That ends the proof
\end{proof}

\begin{dfn}
    We will call the set $H$ from the previous theorem (i.e. the graph of an arbitrary set of generators of $\mathcal{O}_c^a$) a canonical a-normalisation of $A$ and denote it by $\hat{A}$.
\end{dfn}

\begin{rmk}
    The proof of a-normalisation existence is simpler than the original Lojasiewicz proof of normalisations for analytic sets. All thanks to global definitions of the universal denominator and functions in $\mathcal{O}_c^a(A).$ We were able to omit difficulties arising in showing the proper class of the set $H$ and we didn't need to glue up local normalisations.
\end{rmk}


\begin{prop}
    Let $\pi_1:Y_1\to A$, $\pi_2:Y_2\to A$ be two a-nor\-ma\-li\-sa\-tions of $A$. Then, there exists a polynomial $P$, uniquely determined on $Y_1$, such that $P|Y_1=\pi_2^{-1}\circ\pi_1$. 
\end{prop}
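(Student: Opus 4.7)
The plan is to define $\psi := \pi_2^{-1}\circ\pi_1 \colon Y_1 \to Y_2 \subset \C^{N_2}$ and show that each coordinate function $\psi_j$ belongs to $\mathcal{O}_c^a(Y_1)$. Once this is in hand, a-normality of $Y_1$ immediately produces polynomials $P_j$ on the ambient space with $P_j|Y_1 = \psi_j$, and $P := (P_1,\dots,P_{N_2})$ is the sought-after polynomial map. Uniqueness as a function on $Y_1$ is automatic, since $\psi$ is set-theoretically prescribed by the data.

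To verify $\psi_j \in \mathcal{O}_c^a(Y_1)$, three properties are needed. For the algebraic graph condition, observe that the graph of $\psi$ itself coincides with the fibre product
\[
Y_1 \times_A Y_2 \;=\; \{(y_1,y_2)\in Y_1\times Y_2 : \pi_1(y_1)=\pi_2(y_2)\},
\]
which is algebraic, being cut out by the polynomial relations defining the algebraic graphs of $\pi_1$ and $\pi_2$. Bijectivity of $\pi_1$ and $\pi_2$ makes the projection to $Y_1$ bijective, and further linear projection onto $(y_1,y_{2,j})$ exhibits $\Gamma_{\psi_j}$ as a constructible set; once it is known to be Euclidean-closed, the standard fact that a constructible subset of $\C^N$ which is Euclidean-closed is algebraic delivers the graph condition. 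For holomorphy on $\mathrm{Reg}\,Y_1$, on the dense open $U_1 := \pi_1^{-1}(\mathrm{Reg}\,A)$, $\psi$ factors as a composition of two biholomorphisms by conditions (3)--(4) of Definition~\ref{aNormDef}, and Riemann's removable singularity theorem extends $\psi_j$ holomorphically across the thin remainder $\mathrm{Reg}\,Y_1\setminus U_1 \subset \pi_1^{-1}(\mathrm{Sing}\,A)$, provided $\psi_j$ is locally bounded.

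Both the Euclidean-closedness of $\Gamma_{\psi_j}$ and the local boundedness required for Riemann extension follow from the continuity of $\psi$ on $Y_1$, which in turn reduces to the properness of $\pi_2 \colon Y_2 \to A$. This is the main obstacle. By a-normality of $Y_2$, each component of $\pi_2$ extends to a polynomial, so $\pi_2$ is the restriction of a polynomial map $\tilde\pi_2\colon\C^{N_2}\to\C^m$; a failure of properness would produce a sequence $y_n\in Y_2$ with $\|y_n\|\to\infty$ and $\tilde\pi_2(y_n)\to a_*\in A$. Passing to the projective closure $\overline{\Gamma_{\pi_2}}\subset\mathbb{P}^{N_2}\times\C^m$, this yields a point at infinity over $a_*$; comparing the dimension of the infinity locus (strictly less than $\dim A$ since $\overline{\Gamma_{\pi_2}}\setminus\Gamma_{\pi_2}$ is a proper algebraic subset of the irreducible-componentwise pure-dimensional $\overline{\Gamma_{\pi_2}}$) with the bijectivity of $\pi_2$ on the affine part, and using density of $\mathrm{Reg}\,A$ in $A$ together with biholomorphism condition~(4), one derives a contradiction. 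With continuity and the three c-algebraic conditions in place, the assembly described in the first paragraph produces the required $P$.
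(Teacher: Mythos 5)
Your overall architecture is reasonable, and you have correctly isolated the one genuine difficulty --- which is exactly the point the paper itself concedes is unresolved: everything reduces to the continuity of $\pi_2^{-1}$, i.e.\ to the properness of $\pi_2$. The fibre-product description of the graph, the Riemann extension across $\pi_1^{-1}(\mathrm{Sing}\,A)$, and the final appeal to a-normality of $Y_1$ all work \emph{conditionally} on that continuity. The gap is that your properness argument is only a sketch (``one derives a contradiction''), and it cannot be completed: with Definition~\ref{aNormDef} as stated, an a-normalisation need not be proper. Take $A=\{y^2=x^2(x+1)\}\subset\C^2$, the nodal cubic, normalised by $\nu(t)=(t^2-1,t^3-t)$ with $\nu(1)=\nu(-1)=(0,0)$. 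One checks that $\mathcal{O}_c^a(A)$ consists of the restrictions of polynomials, so $\pi_1=\mathrm{id}_A$ is an a-normalisation. On the other hand let $Y_2=\{(t,s)\in\C^2 : s(t-1)=1\}\cong\C\setminus\{1\}$ and $\pi_2(t,s)=(t^2-1,t^3-t)$. Then $Y_2$ is a smooth a-normal curve (every c-algebraic function on it lies in $\C[t,(t-1)^{-1}]$, hence is a polynomial in $t$ and $s$), $\pi_2$ is a bijective polynomial (hence c-algebraic) map onto $A$, and it restricts to a biholomorphism $\pi_2^{-1}(\mathrm{Reg}\,A)=Y_2\setminus\{(-1,-\tfrac12)\}\to A\setminus\{0\}$ with inverse $(x,y)\mapsto\bigl(y/x,\;x/(y-x)\bigr)$. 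All four conditions of Definition~\ref{aNormDef} hold, yet $\pi_2$ is not proper: as $t\to1$ one has $(t,s)\to\infty$ in $Y_2$ while $\pi_2(t,s)\to(0,0)$. Consequently $\pi_2^{-1}\circ\pi_1=\pi_2^{-1}$ is discontinuous at the origin and is certainly not the restriction of a polynomial.

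In your projective-closure argument the hoped-for contradiction does not materialise in this example: $\overline{\Gamma_{\pi_2}}\setminus\Gamma_{\pi_2}$ is the single point $([0:1:0],(0,0))$, of dimension $0<\dim A$, lying over the singular point of $A$. Bijectivity of the affine part, density of $\mathrm{Reg}\,A$, and condition (4) --- which controls $\pi_2$ only \emph{over} $\mathrm{Reg}\,A$, not near its boundary $\mathrm{Sing}\,A$ --- do not forbid the locus at infinity from sitting over $\mathrm{Sing}\,A$. (One can show it must sit over $\mathrm{Sing}\,A$, but the argument stops there.) The statement can only be rescued by strengthening Definition~\ref{aNormDef}, e.g.\ by requiring $\pi$ to be finite or proper as in the classical definition of normalisation, or to be a homeomorphism; with such a hypothesis your assembly does go through. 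For comparison, the paper's own ``proof'' is an explicit non-proof: it records the same reduction and states that the required c-algebraicity of $\pi_2^{-1}$ is still to be proved, so you have correctly located the obstruction but not removed it.
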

\begin{proof}
    An idea of a proof is to observe that $\pi_2^{-1}\circ\pi_1$ is a composition of two c-algebraic functions. Being so it is also a c-algebraic function defined on $Y_1$. Therefore, it extends to a polynomial $P$ with $P|Y_1=\pi_2^{-1}\circ\pi_1$. Yet there is a problem here. The definition of the a-normalisation imposes on $\pi_i$ only holomorphicity. A stronger c-algebraic condition is still to be proved.   
\end{proof}

\section{Nullstellensatz}

A-normalisations give us a short and natural proof of the Nullstellensatz. 

\begin{thm}
    Let $A\subset \mathbb{C}_x^k$ be a purely dimensional algebraic set and $g,f_1,\ldots,f_r\in\mathcal{O}_c^a(A)$ be such that $g^{-1}(0)\supset \bigcap f_i^{-1}(0)$. Then there exist functions $q_i\in\mathcal{O}_c^a(A)$ and a natural number $n>0$ such that 
    \[g(x)^n=\sum_{i=1}^r q_i(x)f_i(x).\]
\end{thm}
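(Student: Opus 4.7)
The plan is to reduce to the classical Hilbert Nullstellensatz by transporting the statement through the canonical a-normalisation $\pi\colon\hat{A}\to A$ built in the previous section. The whole point of a-normality is that every c-algebraic function on $\hat{A}$ is the restriction of an honest polynomial from the ambient $\mathbb{C}^k\times\mathbb{C}^r$, so once $g$ and the $f_i$ are lifted to $\hat{A}$ we find ourselves in a purely polynomial situation where Hilbert's theorem applies directly.

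Concretely, I would first observe that the composites $g\circ\pi$ and $f_i\circ\pi$ are c-algebraic on $\hat{A}$ (as compositions of c-algebraic maps), and so the a-normality of $\hat{A}$ produces polynomials $G,F_1,\ldots,F_r\in\mathbb{C}[z,w]$ whose restrictions to $\hat{A}$ recover them. Bijectivity of $\pi$ transfers the hypothesis verbatim to
\[V(F_1,\ldots,F_r)\cap\hat{A}\subset V(G)\cap\hat{A}.\]
Adjoining polynomial generators $s_1,\ldots,s_\ell$ of the ideal $I(\hat{A})\subset\mathbb{C}[z,w]$, this becomes $V(F_1,\ldots,F_r,s_1,\ldots,s_\ell)\subset V(G)$ in the ambient affine space, and the classical Hilbert Nullstellensatz furnishes an integer $n>0$ and polynomials $Q_i,R_j$ with
\[G^n=\sum_{i=1}^r Q_iF_i+\sum_{j=1}^\ell R_js_j.\]

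Restricting this identity to $\hat{A}$ kills the last sum and leaves the relation $G^n=\sum Q_iF_i$ on $\hat{A}$. It then remains to push this back to $A$ along $\pi^{-1}$: since $\pi^{-1}\colon A\ni z\mapsto(z,h_1(z),\ldots,h_r(z))\in\hat{A}$ is itself c-algebraic (its components are coordinate functions and the generators $h_i$), the functions $q_i(z):=Q_i(z,h_1(z),\ldots,h_r(z))$ are c-algebraic on $A$ as polynomial combinations of c-algebraic functions, and evaluating the restricted identity at $\pi^{-1}(z)$ yields the desired $g(z)^n=\sum q_i(z)f_i(z)$. The only subtle point in the whole argument is the very first one — that composition with $\pi$ preserves c-algebraicity and that the vanishing inclusion survives the passage through $\pi$ — but both facts follow immediately from $\pi$ being a c-algebraic bijection between $\hat{A}$ and $A$, so no genuine obstacle arises.
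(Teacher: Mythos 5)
Your argument is correct and follows essentially the same route as the paper's own proof: pull back through the canonical a-normalisation, extend to polynomials by a-normality, adjoin the equations of $\hat{A}$, apply the classical Nullstellensatz, and substitute the generators $h_i$ to descend back to $A$. The only cosmetic difference is that you phrase the adjoined equations as generators of the ideal $I(\hat{A})$ rather than as a defining system of polynomials for $\hat{A}$, which changes nothing in the application of Hilbert's theorem.
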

\begin{proof}
    Consider the canonical a-normalisation $\hat{A}\subset \mathbb{C}_x^k\times\mathbb{C}_t^m$. Then the pullbacks $\hat{g}=g\circ \pi$ and $\hat{f_i}=f_i\circ\pi$ are all c-algebraic on $\hat{A}$. Therefore, we can find polynomials $G, F_i$ extending $\hat{g},\hat{f_i}$ respectively. Since $\hat{A}$ is algebraic we can also find polynomials $P_i$ such that $A=\bigcap P^{-1}(0)$. 
    
    Now, observe that $G^{-1}(0)\supset \bigcap_{i=1}^r F^{-1}_i(0)\cap \bigcap_{j=1}^d P^{-1}_j(0)$. By the traditional Nullstellensatz we can find complex polynomials $w_i, v_i$ and a natural number $n>0$ such that 
    \[G^n=\sum_{i=1}^r w_iF_i+\sum_{j=1}^d v_jP_j.\]
    On $\hat{A}$ we have then $\hat{g}^n=G^n=\sum w_iF_i=\sum w_i\hat{f_i}$. Placing $h_j(x)$ (the $j-th$ generator of $\mathcal{O}_c^a(A)$) in place of $t_j$ coordinate produces c-algebraic functions $q_i=w_i\circ \pi^{-1}\in\mathcal{O}_c^a(A)$ such that \[g(x)^n=\sum_{i=1}^r q_i(x)f_i(x).\]
\end{proof}

Clearly, we have the 'weak' consequence.

\begin{cor}
    Let $A$ be a purely dimensional algebraic set and $f_1,\ldots,f_r\in\mathcal{O}_c^a(A)$ have no common zeros. Then, they generate the unit ideal.
\end{cor}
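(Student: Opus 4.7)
The plan is to deduce the corollary as an immediate specialisation of the preceding Nullstellensatz, choosing the auxiliary function $g$ to be the constant function $1$.

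First, I would observe that the constant function $g \equiv 1$ lies in $\mathcal{O}_c^a(A)$: it is trivially continuous and holomorphic on $Reg\, A$, and its graph $A \times \{1\} \subset \mathbb{C}^k \times \mathbb{C}$ is algebraic whenever $A$ is (being the intersection of the algebraic cylinder over $A$ with the hyperplane $\{w = 1\}$). So $g$ is an admissible choice of c-algebraic function for the previous theorem.

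Next, the hypothesis that $f_1, \ldots, f_r$ have no common zeros means $\bigcap_{i=1}^r f_i^{-1}(0) = \varnothing$. In particular, the (vacuous) inclusion $g^{-1}(0) = \varnothing \supset \bigcap_{i=1}^r f_i^{-1}(0)$ holds, so the hypothesis of the Nullstellensatz is satisfied. Applying it produces c-algebraic functions $q_i \in \mathcal{O}_c^a(A)$ and an integer $n > 0$ with
\[
1 = 1^n = g(x)^n = \sum_{i=1}^r q_i(x) f_i(x).
\]
This exhibits $1$ as an element of the ideal generated by $f_1, \ldots, f_r$ in $\mathcal{O}_c^a(A)$, which is exactly the statement that they generate the unit ideal.

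There is no real obstacle here; the only point to check is that the Nullstellensatz is applicable to the constant function $1$, and the argument is otherwise formal. If anything, one might wish to note (for bookkeeping) that $\mathcal{O}_c^a(A)$ is indeed a ring containing $1$, so that "unit ideal" is meaningful; this is immediate from Definition 1.3.
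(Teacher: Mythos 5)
Your proof is correct and is exactly the argument the paper intends (the paper simply labels the corollary a ``weak'' consequence and leaves it implicit): take $g\equiv 1$, note $\varnothing=g^{-1}(0)\supset\bigcap f_i^{-1}(0)$, and apply the Nullstellensatz to write $1=g^n=\sum q_i f_i$. Your additional checks that $1\in\mathcal{O}_c^a(A)$ and that its graph is algebraic are fine and harmless.
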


Let us note briefly that the effective Nullstellensatz for the ring of c-algebraic functions was obtained recently in \cite{BDT} using a completely different method. The approach based on a-normalisations presented in the article, however, despite its weaker result, has the advantage of a more straightforward proof. 

\section{The canonical a-normalisation and the growth exponent}

Let $f\in\mathcal{O}_c^a(A)$. We can pull $f$ back on the canonical a-normalisation $\hat{A}$ by putting $\hat{f}(x,y)=f\circ \pi (x,y),$ where $\pi:\hat{A}\to A$ is a natural projection.

Let us recall that in \cite{BDTW} we defined a growth exponent for a c-algebraic function:

\begin{dfn}\label{growth exponent}
    For a function $f\in\mathcal{O}_c^a(A)$, we call a number
\[ \mathcal{B}(f):=\inf\{s\geq 0\mid |f(x)|\leq \mathrm{const.}(1+\| x\|)^s,\ x\in A\} \]
    the growth exponent of $f$.
\end{dfn}
The growth exponent $\mathcal{B}(f)$ is always a rational number and is always attained. Moreover, it does not depend on the choice of the norm in the definition; being so we will work with the maximum norm.
The growth exponent plays the role of a degree of a polynomial in the c-algebraic functions ring (see \cite{BDTW, BDT, BDT2}).

It is interesting, how the growth exponents of $f$ and $\hat{f}$ behave.

Firstly, set $\mathcal{B}(f)=b$ and observe that we can simply bound as follows:
\[\|\hat{f}(x,y)\|=\|f(x)\|\leq C(1+\|x\|)^b\leq C(1+\|x\|+\|y\|)^b=C(1+\|(x,y)\|)^b.\]

Therefore, we obtain an inequality:  $\mathcal{B}(\hat{f})\leq \mathcal{B}(f)$.
On the other hand if we take any $(x,y)\in\hat{A}$ and $C,s>0$ we can write:
\[C(1+\|x\|+\|y\|)^s=C(1+\|x\|+\sum \|h_i(x)\|)^s\]

Where $h_i$ are the generators of $\mathcal{O}_c^a(A)$, thus we can estimate further
 
 \[C(1+\|x\|+\sum \|h_i(x)\|)^s \leq C(1+\|x\|+\sum c_i(1+\|x\|)^{\mathcal{B}(h_i)})^s.\]

 Now changing the constant we can rewrite the last formula as
 \[C(1+\|x\|)^{s\cdot \max\{1,\mathcal{B}(h_i)\}}\]
 and observe that should $s\cdot\max\{1,\mathcal{B}(h_i)\}<\mathcal{B}(f)$ we would find $x\in A$ such that \[C(1+\|x\|)^{s\cdot \max\{1,\mathcal{B}(h_i)\}}<\|f(x)\|=\|\hat{f}(x,y)\|.\]
 We have proven therefore

 \begin{prop}
     Let $\hat{A}$ be a canonical a-normalisation of $A$. Then for any $f\in\mathcal{O}_c^a(A)$ and its pullback  $\hat{f}$ we have
     \[ \mathcal{B}(f)\geq \mathcal{B}(\hat{f})\geq \frac{\mathcal{B}(f)}{\max\{1,\mathcal{B}(h_i)\}}.\]
     Where $h_i$ are the generators of $\mathcal{O}_c^a(A)$.
 \end{prop}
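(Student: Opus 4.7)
The plan is to establish the two inequalities separately, both by direct comparison of the growth of $f$ on $A$ and $\hat{f}$ on $\hat{A}$ through the projection $\pi:\hat{A}\to A$ and the defining relation $y=(h_1(x),\ldots,h_r(x))$ on $\hat{A}$. Since $\mathcal{B}(\cdot)$ does not depend on the choice of norm, I would fix the maximum norm on both $\mathbb{C}^m$ and $\mathbb{C}^{m+r}$, and freely use equivalence with the sum norm when convenient, absorbing all transition constants into the multiplicative pre-factors.

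For the upper bound $\mathcal{B}(\hat{f})\leq \mathcal{B}(f)$, I would exploit $\|x\|\leq \|(x,y)\|$ in the maximum norm. Since $\hat{f}(x,y)=f(x)$ and the growth exponent of $f$ is attained, there exists $C>0$ with $|\hat{f}(x,y)|=|f(x)|\leq C(1+\|x\|)^{\mathcal{B}(f)}\leq C(1+\|(x,y)\|)^{\mathcal{B}(f)}$ for every $(x,y)\in\hat{A}$. The definition of $\mathcal{B}(\hat{f})$ as an infimum yields the desired inequality.

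For the lower bound, the idea is to show $\mathcal{B}(f)\leq \mathcal{B}(\hat{f})\cdot \max\{1,\mathcal{B}(h_i)\}$, which rearranges to the claim. Picking an attained bound $|\hat{f}(x,y)|\leq C(1+\|(x,y)\|)^{\mathcal{B}(\hat{f})}$ and substituting $y_i=h_i(x)$, I would estimate each $|h_i(x)|\leq c_i(1+\|x\|)^{\mathcal{B}(h_i)}$, then combine these to bound $1+\|x\|+\sum_i|h_i(x)|$ by a constant multiple of $(1+\|x\|)^{\max\{1,\mathcal{B}(h_i)\}}$. Raising to the exponent $\mathcal{B}(\hat{f})$ and reading the resulting inequality as a growth estimate on $|f(x)|=|\hat{f}(x,h(x))|$ on $A$ gives the desired comparison via the infimum definition of $\mathcal{B}(f)$.

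I do not expect a genuine obstacle here: the argument is bookkeeping with elementary norm estimates, and the attainability of $\mathcal{B}(f)$ and of each $\mathcal{B}(h_i)$, established in earlier work, is precisely what allows us to replace infima by honest inequalities with fixed constants. The mildest subtlety is ensuring that the switch between the norms $\|(x,y)\|$ and $\|x\|+\|y\|$ on $\mathbb{C}^{m+r}$ is absorbed only in the constants and does not affect the exponents.
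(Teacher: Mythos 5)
Your proposal is correct and follows essentially the same route as the paper: the upper bound via $\|x\|\le\|(x,y)\|$ and the lower bound by substituting $y_i=h_i(x)$ and absorbing the growth of the generators into the exponent $\max\{1,\mathcal{B}(h_i)\}$. The only cosmetic difference is that you apply the attained bound for $\mathcal{B}(\hat{f})$ directly, while the paper argues contrapositively with an arbitrary exponent $s$; the content is identical.
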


Now the question remains whether $\mathcal{B}(h_i)$ ever exceed one.

\section{Algebraic approach}
A similar idea exists in Francois Bernard's article \cite{Bernard}, and in A.....
Their approach however is purely algebraic. Bernard's seminormalisations are constructed implicitly as the biggest set on the way to the normalisation of $X$ with bijective morphism.  With algebraic properties at hand, Bernard proves a series of results for semi-normalised sets. It will be beneficial to compare both definitions and see if they lead to the same object. Our approach would give then a tangible construction of the seminormalisation.

\section{Acknowledgements}

The author would like to sincerely thank Professor Maciej Denkowski for pointing out the example of a-normalistaion's application in Nullstellensatz proof. And all the participants of KonfRogi for the discussion that helped to develop this article.

\end{document}